\newcounter{minutes}\setcounter{minutes}{\time}
\newcounter{hours}\setcounter{hours}{\time}
\date{}
\newfont{\cyrilic}{wncyr10 scaled 1000}
\title{On the conjecture of generalized trigonometric and hyperbolic functions}
\author[B.A Bhayo]{Barkat Ali Bhayo}
\address{Department of Mathematical Information Technology, University of Jyv\"askyl\"a, 40014 Jyv\"askyl\"a, Finland}
\email{bhayo.barkat@gmail.com}
\author[L. Yin]{Li Yin}
\address[L. Yin]{Department of Mathematics and Information Science, Binzhou University, Binzhou City, Shandong Province, 256603, China}
\email{yinli\_79@163.com}
\newcommand{\comment}[1]{}
\theoremstyle{plain}
\newtheorem{theorem}[equation]{Theorem}
\newtheorem{lemma}[equation]{Lemma}
\newtheorem{corollary}[equation]{Corollary}
\numberwithin{equation}{section}
\begin{document}


\def\thefootnote{}
\footnotetext{ \texttt{\tiny File:~\jobname .tex,
          printed: \number\year-\number\month-\number\day,
          \thehours.\ifnum\theminutes<10{0}\fi\theminutes}
} \makeatletter\def\thefootnote{\@arabic\c@footnote}\makeatother

\thanks {The second author was supported by NSF of Shandong
Province under grant numbers ZR2012AQ028, and by the
Science Foundation of Binzhou University under grant BZXYL1303.}

\maketitle

\begin{abstract}
In this paper we prove the conjecture posed by Kl\'en et al. in \cite{kvz}, and give optimal inequalities for generalized
trigonometric and hyperbolic functions.
\end{abstract}

\bigskip
{\bf 2010 Mathematics Subject Classification}: 33B10

{\bf Keywords}: Generalized trigonometric functions, generalized hyperbolic functions, Eigenfunctions
of $p$-Laplacian.


\section{introduction}

In 1995, P. Lindqvist \cite{l} studied the generalized trigonometric
and hyperbolic functions with parameter $p>1$. Thereafter
several authors became interested to work on the equalities and inequalities of
these generalized functions, e.g, see
\cite{bv1,bv2,bv3,bbv,be,egl,jq,take} and the references therein.
Recently, Kl\'en et al. \cite{kvz} were motivated by many results on these generalized
trigonometric and hyperbolic functions, and they
generalized some classical inequalities in terms of generalized
trigonometric and hyperbolic functions, such as
Mitrinovi\'c-Adamovi\'c inequality, Huygens' inequality, and
Wilker's inequality.
In this paper we prove the conjecture posed by Kl\'en et al. in \cite{kvz}, and in Theorem \ref{thm3.2} we generalize the 
inequality
$$\frac{1}{\cosh(x)^a}<\frac{\sin(x)}{x}<\frac{1}{\cosh(x)^b},$$
where $a=\log(\pi/2)/\log(\cosh(\pi/2)) \approx 0.4909$ and $b=1/3$, due to Neuman and S\'andor
\cite[Theorem 2.1]{neusan}.
 
For the formulation of our main results we give the definitions of the
generalized trigonometric and hyperbolic functions as below.


The increasing homeomorphism function $F_{p}:[0,1]\to [0,\pi_{p}/2]$ is defined by
$$F_{p}(x)={\rm arcsin}_{p}(x)=\int_0^x{(1-t^p)}^{-1/p}\,dt,$$
and its inverse $\sin_{p,q}$ is
called generalized sine function, which is defined
on the interval $[0,\pi_{p}/2]$,
where
$${\rm arcsin}_{p}(1)=\pi_{p}/2.$$
The function $\sin_p$ is strictly increasing and concave on $[0,\pi_p/2]$, and it is also called the eigenfunction of the Dirichlet eigenvalue problem for the one-dimensional $p-$Laplacian \cite{dm}.
In the same way, we can define the
generalized cosine function, the generalized tangent, and
also the corresponding hyperbolic functions.

The generalized cosine function is defined by
$$\frac{d}{dx}\sin_{p}(x)=\cos_{p}(x),\quad x\in[0,\pi_{p}/2]\,.$$
It follows from the definition that
$$\cos_{p}(x)=(1-(\sin_{p}(x))^p)^{1/p}\,,$$
and
\begin{equation}\label{equ2}
|\cos_{p}(x)|^p+|\sin_{p}(x)|^p=1,\quad x\in\mathbb{R}.
\end{equation}
Clearly we get
$$\frac{d}{dx}\cos_p(x)=-\cos_p(x)^{2-p}\sin_p(x)^{p-1}.$$

The generalized tangent function $\tan_{p}$ is defined by
$$\tan_{p}(x)=\frac{\sin_{p}(x)}{\cos_{p}(x)}.$$

For $x\in(0,\infty)$, the inverse of generalized hyperbolic sine function $\sinh_{p}(x)$ is defined by
$${\rm arsinh}_{p}(x)=\int^x_0(1+t^p)^{-1/p}dt,\,$$
and generalized hyperbolic cosine and tangent functions are defined by
$$\cosh_{p}(x)=\frac{d}{dx}\sinh_{p}(x),\quad \tanh_{p}(x)=\frac{\sinh_{p}(x)}{\cosh_{p}(x)}\,,$$
respectively. It follows from the definitions that
\begin{equation}\label{equ3}
|\cosh_{p}(x)|^p-|\sinh_{p}(x)|^p=1.
\end{equation}
From above definition and (\ref{equ3})
we get the following derivative formulas,
$$\frac{d}{dx}\cosh_p(x)=\cos_p(x)^{2-p}\sin_p(x)^{p-1},\quad \frac{d}{dx}\tanh_p(x)=1-|\tanh_p(x)|^p.$$
Note that these generalized trigonometric and hyperbolic functions coincide with usual functions for $p=2$.

Our main result reads as follows:

\begin{theorem}\cite[Conjecture 3.12]{kvz}\label{thm3.1}
For $p\in[2,\infty)$, the function
$$f(x)=\frac{\log(x/\sin_p(x))}{\log(\sinh_p(x)/x)}$$
is strictly increasing from $(0,\pi _p/2)$ onto $(1,p)$. In particular,
$$\left(\frac{x}{\sinh_p(x)}\right)^p<\frac{\sin_p(x)}{x}<\frac{x}{\sinh_p(x)}.$$
\end{theorem}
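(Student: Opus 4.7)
The plan is to establish monotonicity of $f$ via the l'Hopital rule of monotonicity, compute $f(0^+)=1$ from Taylor expansions, and then derive the displayed inequality. Write $f=U/V$ with
\[
U(x)=\log(x/\sin_p(x)), \qquad V(x)=\log(\sinh_p(x)/x),
\]
both positive on $(0,\pi_p/2)$ and vanishing at $0^+$. Inverting the integral representations yields $\sin_p(x)=x-x^{p+1}/(p(p+1))+O(x^{2p+1})$ and $\sinh_p(x)=x+x^{p+1}/(p(p+1))+O(x^{2p+1})$, so $U(x)\sim V(x)\sim x^p/(p(p+1))$ as $x\to 0^+$, giving $f(0^+)=1$.

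For the monotonicity I differentiate, using $\sin_p'=\cos_p$ and $\sinh_p'=\cosh_p$, to obtain
\[
U'(x)=\frac{T_1(x)}{x\sin_p(x)},\qquad V'(x)=\frac{T_2(x)}{x\sinh_p(x)},
\]
where $T_1(x):=\sin_p(x)-x\cos_p(x)$ and $T_2(x):=x\cosh_p(x)-\sinh_p(x)$. Using the derivative formulas from the introduction, $T_1'(x)=x\cos_p(x)^{2-p}\sin_p(x)^{p-1}>0$ and $T_2'(x)=x\cosh_p(x)^{2-p}\sinh_p(x)^{p-1}>0$, so $T_1,T_2,V'>0$ on $(0,\pi_p/2)$. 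Since $U,V$ vanish at $0$, the l'Hopital rule of monotonicity reduces the claim to showing that
\[
\frac{U'(x)}{V'(x)}=\frac{\sinh_p(x)}{\sin_p(x)}\cdot\frac{T_1(x)}{T_2(x)}
\]
is strictly increasing on $(0,\pi_p/2)$.

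This last reduction is where the real work lies, and is the main obstacle. A quick numerical check at $p=2$ shows that $\sinh_p/\sin_p$ is strictly increasing while $T_1/T_2$ is strictly decreasing, yet their product is strictly increasing; hence the monotonicity of either factor alone does not suffice. My plan is to apply the monotone l'Hopital rule a second time to $T_1/T_2$ (both vanish at $0$), reducing to the behaviour of
\[
\frac{T_1'(x)}{T_2'(x)}=\left(\frac{\cos_p(x)}{\cosh_p(x)}\right)^{2-p}\!\left(\frac{\sin_p(x)}{\sinh_p(x)}\right)^{p-1},
\]
and then to compute the logarithmic derivative of the full expression for $U'/V'$, collapsing the result via the identities $\cos_p^p+\sin_p^p=1$ and $\cosh_p^p-\sinh_p^p=1$ to an elementary inequality among $\sin_p,\cos_p,\sinh_p,\cosh_p$. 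The hypothesis $p\geq 2$ enters critically here through the sign of $2-p$, governing the cross-interaction between the decreasing factor $(\sin_p/\sinh_p)^{p-1}$ and the increasing factor $(\cosh_p/\cos_p)^{p-2}$ (the latter being trivial at $p=2$).

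Finally, the displayed inequality $(x/\sinh_p(x))^p<\sin_p(x)/x<x/\sinh_p(x)$ is just the rewriting of $1<f(x)<p$. Strict monotonicity together with $f(0^+)=1$ delivers the left bound $f>1$. For the right bound I plan to verify separately that $h(x):=pV(x)-U(x)=\log(\sin_p(x)\sinh_p(x)^p/x^{p+1})$ satisfies $h(0^+)=0$ and $h'(x)>0$, which amounts to the generalized Huygens-type inequality $p\cosh_p(x)/\sinh_p(x)+\cos_p(x)/\sin_p(x)>(p+1)/x$; this is tractable by the same l'Hopital-type techniques together with the defining identities for the generalized functions.
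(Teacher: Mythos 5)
Your setup is correct as far as it goes: the expansions giving $f(0^+)=1$, the formulas $U'(x)=T_1(x)/(x\sin_p(x))$, $V'(x)=T_2(x)/(x\sinh_p(x))$, $T_1'(x)=x\cos_p(x)^{2-p}\sin_p(x)^{p-1}$, $T_2'(x)=x\cosh_p(x)^{2-p}\sinh_p(x)^{p-1}$, and the equivalence of the displayed inequalities with $1<f(x)<p$ all check out, and your observation that neither factor of $U'/V'=(\sinh_p/\sin_p)\cdot(T_1/T_2)$ is monotone in the helpful direction is an important one. But precisely at that point the argument stops being a proof. A second application of the monotone l'H\^opital rule to $T_1/T_2$ can only certify that $T_1/T_2$ is monotone (in fact decreasing), which is exactly the information you have already identified as insufficient: it pushes the product the wrong way. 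What is needed is a quantitative comparison between the rate of increase of $\sinh_p/\sin_p$ and the rate of decrease of $T_1/T_2$, and the ``elementary inequality'' into which the logarithmic derivative of $U'/V'$ is supposed to collapse is never written down, let alone proved; there is no evidence that this route closes. The paper sidesteps the issue entirely: it never shows $U'/V'$ to be monotone. Instead it computes
$$xV(x)^2f'(x)=\frac{T_1(x)}{\sin_p(x)}\,V(x)-\frac{T_2(x)}{\sinh_p(x)}\,U(x)$$
and proves positivity from two \emph{decoupled} scalar estimates, $U(x)\le T_1(x)/(p\sin_p(x))$ and $V(x)\ge T_2(x)/(p\sinh_p(x))$ (Lemmas \ref{(2.1-eq)} and \ref{(2.2-eq)}), each obtained by a separate one-variable monotone-l'H\^opital argument applied to the quotients $p\sin_p(x)U(x)/T_1(x)$ and $p\sinh_p(x)V(x)/T_2(x)$, whose limits at the origin are $1$. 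Chaining the two bounds gives
$$\frac{T_1}{\sin_p}\,V\;\ge\;\frac{T_1}{\sin_p}\cdot\frac{T_2}{p\sinh_p}\;=\;\frac{T_2}{\sinh_p}\cdot\frac{T_1}{p\sin_p}\;\ge\;\frac{T_2}{\sinh_p}\,U.$$
That decoupling is the idea your proposal is missing.

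The second gap is the upper bound $f<p$. You are right that it cannot be read off from monotonicity together with $f(0^+)=1$ (the limit of $f$ at $\pi_p/2$ is the finite constant $\log(\pi_p/2)/\log(2\sinh_p(\pi_p/2)/\pi_p)$, which is not $p$ in general, e.g.\ roughly $1.18$ for $p=2$), and your reformulation as $h:=pV-U$ with $h(0^+)=0$ and $h'>0$, i.e.\ as the Huygens-type inequality $p\cosh_p(x)/\sinh_p(x)+\cos_p(x)/\sin_p(x)>(p+1)/x$, is a correct reduction. But that inequality carries the entire content of this half of the theorem, and it is only asserted to be ``tractable.'' (In the paper it is essentially the content of Lemma \ref{(2.2-eq)} combined with Lemma \ref{(2.1-eq)}.) Until both it and the monotonicity of $U'/V'$ are actually established, what you have is a plan rather than a proof.
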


\begin{theorem}\label{thm3.2}
For $p\in[2,\infty)$, the function
$$g(x)=\frac{\log(x/\sin_p(x))}{\log(\cosh_p(x))}$$
is strictly increasing in $x\in(0,\pi_p/2)$. In particular, we have
$$\frac{1}{\cosh_p(x)^\beta}<\frac{\sin_p(x)}{x}< \frac{1}{\cosh_p(x)^\alpha},$$
where
$\alpha  = 1/(1+p)$
and
$\beta=\log(\pi_p/2)/\log(\cosh_p(\pi_p/2))$
are the best possible constants.
\end{theorem}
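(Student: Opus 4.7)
The plan is to establish the monotonicity of $g$ on $(0,\pi_p/2)$ and then read off the sharp inequality from the two endpoint limits. The first step is to compute those limits. From the defining ODE $\sin_p'(x)=(1-\sin_p(x)^p)^{1/p}$ one derives the expansion $\sin_p x = x - x^{p+1}/(p(p+1)) + O(x^{2p+1})$, and similarly $\cosh_p x = 1 + x^p/p + O(x^{2p})$. Consequently $\log(x/\sin_p x) \sim x^p/(p(p+1))$ and $\log\cosh_p x \sim x^p/p$ as $x\to 0^+$, giving $g(0^+) = 1/(p+1) = \alpha$. At the other endpoint, $\sin_p(\pi_p/2)=1$ makes $g(\pi_p/2) = \log(\pi_p/2)/\log\cosh_p(\pi_p/2) = \beta$ hold by direct substitution. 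Once monotonicity of $g$ is in place, the equivalent form $\alpha < g(x) < \beta$ exponentiates to the desired two-sided inequality, and attainment of the limits at the endpoints makes $\alpha$ and $\beta$ the best constants.

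For the monotonicity, I would write $g=A/B$ with $A(x)=\log(x/\sin_p x)$ and $B(x)=\log\cosh_p x$, both of which vanish at $0$, and since $B'(x) = \tanh_p(x)^{p-1} > 0$ on $(0,\pi_p/2)$, the monotone form of L'H\^opital's rule reduces the claim to the monotonicity of
$$h(x) := \frac{A'(x)}{B'(x)} = \frac{(\sin_p x - x\cos_p x)\,\cosh_p(x)^{p-1}}{x \sin_p(x)\,\sinh_p(x)^{p-1}}.$$
Note that $\sin_p x - x\cos_p x > 0$ on $(0,\pi_p/2)$ by concavity of $\sin_p$, so $h$ is positive.

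The technical core is to prove that $h$ is strictly increasing. My approach is to apply the monotone L'H\^opital rule once more to $h=\phi/\psi$, where $\phi$ and $\psi$ are the numerator and denominator above; both vanish at $0$, with the leading $x^{p+1}$ behavior matching so that $\phi'/\psi'\to 1/(p+1)$ (agreeing with the limit $h(0^+)=1/(p+1)$ forced by consistency). The numerator derivative simplifies pleasantly because of the cancellation $(\sin_p x - x\cos_p x)' = -x\cos_p'(x) = x\cos_p(x)^{2-p}\sin_p(x)^{p-1}$, combined with $(\cosh_p^{p-1})' = (p-1)\cosh_p(x)\sinh_p(x)^{p-1}$. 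The denominator derivative is obtained from the product rule together with $(\tanh_p^{p-1})' = (p-1)\tanh_p(x)^{p-2}(1-\tanh_p(x)^p)$. Using the fundamental identities $\sin_p(x)^p + \cos_p(x)^p = 1$ and $\cosh_p(x)^p - \sinh_p(x)^p = 1$ to clear common factors, I expect the positivity of $(\phi/\psi)'$ to reduce to an algebraic inequality in the four functions $\sin_p, \cos_p, \sinh_p, \cosh_p$ that holds precisely because $p \geq 2$. This step is where I expect the main obstacle, and I anticipate that either a further reduction via the monotone L'H\^opital rule, or a power-series comparison (both sides being series in $x^p$ whose coefficients can be compared term by term using $p \geq 2$), will be required.

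Putting the pieces together, two applications of the monotone L'H\^opital rule yield that $g$ is strictly increasing on $(0,\pi_p/2)$; the boundary values $g(0^+)=\alpha$ and $g(\pi_p/2)=\beta$ then yield the chain $\alpha < g(x) < \beta$, which is equivalent to the stated inequality, with $\alpha$ and $\beta$ sharp.
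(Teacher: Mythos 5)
Your endpoint analysis is sound: the expansions $\sin_p x = x - x^{p+1}/(p(p+1)) + O(x^{2p+1})$ and $\cosh_p x = 1 + x^p/p + O(x^{2p})$ do give $g(0^+)=1/(p+1)=\alpha$, and $g(\pi_p/2)=\beta$ by substitution, so once monotonicity is known the sharpness of $\alpha,\beta$ follows exactly as you say. The problem is that the monotonicity itself — the entire substance of the theorem — is not proved. After correctly reducing, via the monotone l'H\^opital rule, to showing that $h=A'/B'=\dfrac{(\sin_p x-x\cos_p x)\cosh_p(x)^{p-1}}{x\sin_p(x)\sinh_p(x)^{p-1}}$ is increasing, you only state that you ``expect'' the positivity of $(\phi/\psi)'$ to reduce to a tractable algebraic inequality and ``anticipate'' that a further reduction or a term-by-term series comparison ``will be required.'' That deferred step is precisely the hard part; as written the argument has a hole exactly where the difficulty lies, and nothing in your outline guarantees the second application of the monotone l'H\^opital rule will succeed. (There are also slips in the one computation you do record: since $\cosh_p'(x)=\cosh_p(x)^{2-p}\sinh_p(x)^{p-1}$, one gets $(\cosh_p^{p-1})'=(p-1)\sinh_p(x)^{p-1}$, without your extra factor $\cosh_p(x)$; and your denominator $\psi=x\sin_p(x)\sinh_p(x)^{p-1}$ contains no $\tanh_p$, so the formula for $(\tanh_p^{p-1})'$ is not the one needed there.)

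The paper closes this gap by a different and more economical route that avoids a second monotone-l'H\^opital step entirely: it estimates $g'$ from below directly. Writing
$$\bigl(\log\cosh_p(x)\bigr)^2 g'(x)=\frac{\sin_p(x)-x\cos_p(x)}{x\sin_p(x)}\log\cosh_p(x)-\tanh_p(x)^{p-1}\log\frac{x}{\sin_p(x)},$$
it invokes two separately proved bounds: $\log(x/\sin_p(x))<\dfrac{\sin_p(x)-x\cos_p(x)}{p\sin_p(x)}$, coming from the monotonicity of $p\sin_p(x)\log(x/\sin_p(x))/(\sin_p(x)-x\cos_p(x))$ established in Lemma \ref{(2.1-eq)}, and $\log\cosh_p(x)>\dfrac{x}{p}\tanh_p(x)^{p-1}$ from Lemma \ref{2.3-lem} (an elementary differentiation argument). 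Substituting these two bounds makes the resulting terms cancel exactly, giving $g'>0$. To repair your proposal you would either have to actually carry out the series or algebraic comparison you postpone, or replace the second l'H\^opital reduction with explicit estimates of this kind.
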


\section{Preliminaries and proofs}

The following lemmas will be used in the proof of main result.

\begin{lemma}\cite[Theorem 2]{avv1}\label{lem1}
For $-\infty<a<b<\infty$,
let $f,g:[a,b]\to \mathbb{R}$
be continuous on $[a,b]$, and be differentiable on
$(a,b)$. Let $g^{'}(x)\neq 0$
on $(a,b)$. If $f^{'}(x)/g^{'}(x)$ is increasing
(decreasing) on $(a,b)$, then so are
$$\frac{f(x)-f(a)}{g(x)-g(a)}\quad and \quad \frac{f(x)-f(b)}{g(x)-g(b)}.$$
If $f^{'}(x)/g^{'}(x)$ is strictly monotone,
then the monotonicity in the conclusion
is also strict.
\end{lemma}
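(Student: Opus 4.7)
My plan is to establish strict monotonicity of $g$ on $(0,\pi_p/2)$ together with the endpoint limits $\alpha$ and $\beta$; the sharp two-sided inequality then follows by rearranging $\alpha < g(x) < \beta$ and exponentiating (noting that $\log\cosh_p x > 0$ for $x>0$). For the endpoints, the series expansions $\sin_p x = x - x^{p+1}/(p(p+1)) + O(x^{2p+1})$ and $\cosh_p x = 1 + x^p/p + O(x^{2p})$ give $\log(x/\sin_p x)\sim x^p/(p(p+1))$ and $\log\cosh_p x \sim x^p/p$ near $0$, so $g(0^+) = 1/(p+1) = \alpha$; at the upper endpoint $\sin_p(\pi_p/2)=1$ yields $g((\pi_p/2)^-) = \log(\pi_p/2)/\log\cosh_p(\pi_p/2) = \beta$ by continuity.

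The core of the proof is monotonicity, for which I would apply Lemma~\ref{lem1} iteratively. Write $g = \phi/\psi$ with $\phi(x)=\log(x/\sin_p x)$ and $\psi(x)=\log\cosh_p x$, both vanishing at $0$. Using the derivative formulas from Section~1, together with $\cosh_p' = \cosh_p^{2-p}\sinh_p^{p-1}$ (obtained by differentiating (\ref{equ3})), I get
$$\phi'(x) = \frac{\sin_p x - x\cos_p x}{x\sin_p x},\qquad \psi'(x) = \tanh_p(x)^{p-1}.$$
By Lemma~\ref{lem1} it suffices to prove $\phi'/\psi' = A/B$ is strictly increasing, where $A(x) = \sin_p x - x\cos_p x$ and $B(x) = x\sin_p x\,\tanh_p^{p-1}(x)$. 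Since $A(0) = B(0) = 0$, a second application of the lemma reduces matters to monotonicity of $A'/B'$, where $A'(x) = x\cos_p(x)^{2-p}\sin_p(x)^{p-1}$ and $B'$ is expanded by the product rule using the identity $(\tanh_p x)' = 1 - \tanh_p^p x = 1/\cosh_p^p(x)$.

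The main obstacle is this final monotonicity statement for $A'/B'$. My plan is to clear denominators so that $(A'/B')' > 0$ becomes a polynomial-type inequality in the four functions $\sin_p x,\cos_p x,\sinh_p x,\cosh_p x$, then to apply the Pythagorean identities (\ref{equ2}) and (\ref{equ3}) to eliminate mixed powers and reduce the question to the positivity of a single expression on $(0,\pi_p/2)$; this positivity I expect to verify either by a power-series coefficient comparison (both sides agree to leading order at $0$, so one only needs control of higher-order remainders) or, if a more structural argument is needed, by yet a third application of Lemma~\ref{lem1} to a suitably chosen pair of functions vanishing at $x=0$. The hypothesis $p\geq 2$ enters crucially in this last step: it fixes the signs of the exponents $2-p,\,p-1,\,p-2$ appearing in the derivative formulas and in $B'$, so that factors such as $\cos_p(x)^{2-p}\geq 1$ and $\tanh_p(x)^{p-2}\geq 0$ preserve the desired inequalities throughout; for $p<2$ several of these signs flip and the present approach would break.
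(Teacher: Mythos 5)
Your proposal does not prove the statement it is supposed to prove. The statement here is Lemma~\ref{lem1}, the monotone form of l'H\^opital's rule (cited from Anderson--Vamanamurthy--Vuorinen): for continuous $f,g$ on $[a,b]$, differentiable on $(a,b)$ with $g'\neq 0$, monotonicity of $f'/g'$ transfers to $(f(x)-f(a))/(g(x)-g(a))$ and $(f(x)-f(b))/(g(x)-g(b))$. What you have written instead is an outline of a proof of Theorem~\ref{thm3.2} (the monotonicity of $\log(x/\sin_p x)/\log\cosh_p x$ and the resulting sharp bounds with $\alpha=1/(1+p)$ and $\beta=\log(\pi_p/2)/\log(\cosh_p(\pi_p/2))$). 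Indeed, your argument repeatedly \emph{invokes} Lemma~\ref{lem1} as a tool; it cannot simultaneously be a proof of it. Nothing in your text addresses the general abstract claim about arbitrary $f$ and $g$ on an interval $[a,b]$.

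For the record, the lemma itself is a short consequence of the Cauchy mean value theorem: writing $F(x)=\bigl(f(x)-f(a)\bigr)/\bigl(g(x)-g(a)\bigr)$, one computes
$$F'(x)=\frac{g'(x)}{\bigl(g(x)-g(a)\bigr)^2}\left[\frac{f'(x)}{g'(x)}\bigl(g(x)-g(a)\bigr)-\bigl(f(x)-f(a)\bigr)\right],$$
and by Cauchy's mean value theorem $\bigl(f(x)-f(a)\bigr)/\bigl(g(x)-g(a)\bigr)=f'(\xi)/g'(\xi)$ for some $\xi\in(a,x)$; if $f'/g'$ is increasing this is at most $f'(x)/g'(x)$, and since $g'$ has constant sign, $g'(x)\bigl(g(x)-g(a)\bigr)>0$, so $F'(x)\ge 0$ (with strict inequality in the strictly monotone case). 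The endpoint $b$ is handled symmetrically. If your intention was to prove Theorem~\ref{thm3.2}, your outline is broadly in the spirit of the paper's argument but diverges in the details: the paper does not iterate Lemma~\ref{lem1} on $\phi'/\psi'$; instead it bounds $(\log\cosh_p x)^2 g'(x)$ from below directly, using Lemma~\ref{(2.1-eq)} (the lower bound $f(x)>1$ for $p\sin_p(x)\log(x/\sin_p(x))/(\sin_p(x)-x\cos_p(x))$) together with Lemma~\ref{2.3-lem} (the inequality $\log\cosh_p(x)>\frac{x}{p}\tanh_p(x)^{p-1}$), which avoids the ``polynomial-type inequality in four functions'' whose positivity you leave unverified.
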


\begin{lemma}\label{(2.1-eq)}
For $p\in[2,\infty)$, the function
$$f(x)=\frac{p\sin_p(x)\log\left(x/\sin_p(x)\right)}{\sin_p(x)-x\cos_p(x)}$$
is strictly decreasing from $(0,\pi_p/2)$ onto $(1,p\log(\pi_p/2))$. In particular,
$$\exp\left(\frac{1}{p}\left(\frac{x}{\tan_p(x)}-1\right)\right)
< \frac{\sin_p(x)}{x}<
\exp\left(\left(\log\frac{\pi_p}{2}\right)\left(\frac{x}{\tan_p(x)}-1\right)\right).$$
\end{lemma}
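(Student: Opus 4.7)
The plan is to establish the boundary values of $f$, prove strict monotonicity via Lemma~\ref{lem1}, and finally deduce the pointwise inequality by rearrangement.

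For the boundary values, the right endpoint is immediate: $\sin_p(\pi_p/2)=1$ and $\cos_p(\pi_p/2)=0$ give $f(\pi_p/2)=p\log(\pi_p/2)$. For the left endpoint I would use the local expansions
\[
\sin_p(x)=x-\frac{x^{p+1}}{p(p+1)}+O(x^{2p+1}), \qquad \cos_p(x)=1-\frac{x^p}{p}+O(x^{2p}),
\]
which are straightforward consequences of $\sin_p'=\cos_p$ and $\sin_p^p+\cos_p^p=1$. A short calculation then shows that the numerator $p\sin_p(x)\log(x/\sin_p(x))$ and the denominator $\sin_p(x)-x\cos_p(x)$ are both asymptotic to $x^{p+1}/(p+1)$, so $f(x)\to 1$ as $x\to 0^+$.

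For the monotonicity I would factor $f(x)=pu(x)/v(x)$ with $u(x)=\log(x/\sin_p(x))$ and $v(x)=1-x\cos_p(x)/\sin_p(x)$; both $u,v$ vanish at $0$, so Lemma~\ref{lem1} reduces the problem to showing that $u'/v'$ is strictly decreasing. Using $(\cot_p)'=-\cos_p^{2-p}/\sin_p^2$ (a direct consequence of the derivative formulas together with $\sin_p^p+\cos_p^p=1$), one computes
\[
\frac{u'(x)}{v'(x)}=\frac{\sin_p(x)\bigl(\sin_p(x)-x\cos_p(x)\bigr)}{x\cos_p(x)^{2-p}\bigl(x-\sin_p(x)\cos_p(x)^{p-1}\bigr)}.
\]
Equivalently, and often more tractable, one may work directly with $(\log f)'$; after placing the three resulting terms over a common denominator and simplifying via $\sin_p^p+\cos_p^p=1$, the condition $(\log f)'<0$ reduces to the key inequality
\[
\bigl(\sin_p(x)-x\cos_p(x)\bigr)^2<x\cos_p(x)^{2-p}\bigl(x-\sin_p(x)\cos_p(x)^{p-1}\bigr)\log\frac{x}{\sin_p(x)}, \quad x\in(0,\pi_p/2).
\]
This inequality is the main obstacle: both sides vanish to the same order at $x=0$ (to leading order like $x^{2p+2}$), so it is quite delicate. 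I would approach it by introducing the auxiliary functions $P(x)=\sin_p(x)-x\cos_p(x)$ and $Q(x)=x-\sin_p(x)\cos_p(x)^{p-1}$, first observing that $P,Q>0$ on $(0,\pi_p/2)$ since $P(0)=Q(0)=0$ and $P'(x)=x\cos_p(x)^{2-p}\sin_p(x)^{p-1}>0$, $Q'(x)=p\sin_p(x)^p>0$; then letting $W(x)$ denote the right-minus-left side of the displayed inequality and showing $W>0$ via careful analysis of $W'$ (and possibly higher derivatives, as the low-order Taylor coefficients cancel). The hypothesis $p\ge 2$, which makes $\cos_p^{2-p}\ge 1$, will be needed here.

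Once $f$ has been shown to be strictly decreasing from $1$ to $p\log(\pi_p/2)$, so that $p\log(\pi_p/2)<f(x)<1$ on $(0,\pi_p/2)$, the stated pointwise inequality follows by algebra. Rewriting $f(x)=p\log(x/\sin_p(x))/(1-x/\tan_p(x))$ and noting that $1-x/\tan_p(x)>0$ on $(0,\pi_p/2)$ (since $\tan_p'=1+\tan_p^p\ge 1$ gives $\tan_p(x)>x$ for $x>0$), one multiplies each bound on $f(x)$ by the positive quantity $1-x/\tan_p(x)$ and exponentiates to arrive at the claimed two-sided bound for $\sin_p(x)/x$.
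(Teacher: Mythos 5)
Your proposal correctly handles everything except the one step that carries the mathematical content of the lemma. The endpoint limits ($f(0^+)=1$ from the expansions of $\sin_p$ and $\cos_p$, $f(\pi_p/2^-)=p\log(\pi_p/2)$ directly), the identity $(\cot_p)'=-\cos_p(x)^{2-p}/\sin_p(x)^2$, the formula for $u'/v'$, and the final passage from $p\log(\pi_p/2)<f(x)<1$ to the displayed two-sided bound (multiplying by the positive quantity $1-x/\tan_p(x)$ and exponentiating) all check out. But the strict monotonicity itself is never proved. You reduce $(\log f)'<0$ to the ``key inequality'' $\bigl(\sin_p(x)-x\cos_p(x)\bigr)^2<x\cos_p(x)^{2-p}\bigl(x-\sin_p(x)\cos_p(x)^{p-1}\bigr)\log\bigl(x/\sin_p(x)\bigr)$, correctly note that it is delicate, and then stop: ``showing $W>0$ via careful analysis of $W'$ (and possibly higher derivatives)'' is a plan, not an argument. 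The delicacy is worse than you indicate: both sides are asymptotic to $x^{2p+2}/(p+1)^2$, so the \emph{leading coefficients coincide} and a first-derivative comparison of $W$ cannot work near $0$ without going to higher order; moreover you give no indication of where the hypothesis $p\ge2$ would actually be used. Since this inequality is equivalent to the monotonicity claim, the proposal has a genuine gap at its core.

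For comparison, the paper does not normalize by $\sin_p$; it applies Lemma \ref{lem1} to $f_1(x)=p\sin_p(x)\log(x/\sin_p(x))$ and $f_2(x)=\sin_p(x)-x\cos_p(x)$, for which $f_2'(x)=x\cos_p(x)^{2-p}\sin_p(x)^{p-1}$ is clean, and writes $f_1'/f_2'$ in the form $\frac{1}{x\tan_p(x)^{p-1}}\bigl(\frac{\tan_p(x)}{x}+\log\frac{x}{\sin_p(x)}-1\bigr)$, asserting it is a product of two decreasing functions. That route asks only for monotonicity of the derivative ratio rather than the global inequality $u'v<uv'$, which is in principle a lighter burden than yours; be warned, however, that the paper's one-line justification is itself shaky (the second factor increases from $0$ to $\infty$, so the ``product of two decreasing functions'' claim needs repair). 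Either way, whichever decomposition you adopt, you still owe a genuine argument for this monotonicity step before the lemma can be considered proved.
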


\begin{proof} Write
$$f_1(x)=p\sin_p(x)\log\left(x/\sin_p(x)\right), \quad f_2(x)=\sin_p(x)-x\cos_p(x),$$
and clearly $f_1(0)=f_2(0)=0$.
Differentiation with respect $x$ gives
\begin{eqnarray*}
\frac{f_1'(x)}{f_2'(x)}&=&\frac{(\sin_p(x))/x+\cos_p(x)(\log(x/\sin_p(x))-1)}{x\cos_p(x)^{2-p}\sin_p(x)^{p-1}}\\
&=&\frac{1}{x\tan_p(x)^{p-1}}\left(\frac{1}{x\cos_p(x)}+\log\left(\frac{x}{\sin_p(x)}\right)-1\right),
\end{eqnarray*}
which is the product of two decreasing functions, this implies that $f_1'/f_2'$ is decreasing.
Hence the function $f$ is decreasing by Lemma \ref{lem1}. The limiting values follows from the l'H\^{o}spital rule.
\end{proof}

\begin{lemma}\label{(2.2-eq)}
For $p\in[2,\infty)$ the function
$$g(x)=\frac{p\sinh_p(x)\log\left(\sinh_p(x)/x\right)}{x\cosh_p(x)-\sinh_p(x)}$$
is strictly increasing from $(0,\infty)$ onto $(1,p)$. In particular, we have
$$\exp\left(\frac{1}{p}\left(\frac{x}{\tanh_p(x)}-1\right)\right)
< \frac{\sinh_p(x)}{x}<
\exp\left(\left(\frac{x}{\tanh_p(x)}-1\right)\right).$$
\end{lemma}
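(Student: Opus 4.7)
The plan is to mirror the proof of Lemma~\ref{(2.1-eq)}, invoking the monotone L'H\^opital rule (Lemma~\ref{lem1}). First I would set
$$g_1(x)=p\sinh_p(x)\log(\sinh_p(x)/x),\qquad g_2(x)=x\cosh_p(x)-\sinh_p(x),$$
noting that $g_1(0^+)=g_2(0^+)=0$. Using $\sinh_p'=\cosh_p$ and the derivative $\cosh_p'(x)=\cosh_p(x)^{2-p}\sinh_p(x)^{p-1}$ (obtained by differentiating $\cosh_p^p-\sinh_p^p=1$), I would compute
$$g_1'(x)=p\bigl[\cosh_p(x)\bigl(\log(\sinh_p(x)/x)+1\bigr)-\sinh_p(x)/x\bigr],\qquad g_2'(x)=x\cosh_p(x)^{2-p}\sinh_p(x)^{p-1},$$
and simplify the ratio by factoring $(\cosh_p/\sinh_p)^{p-1}$ out of the numerator:
$$\frac{g_1'(x)}{g_2'(x)}=\frac{p}{x\tanh_p(x)^{p-1}}\left[\log\frac{\sinh_p(x)}{x}+1-\frac{\tanh_p(x)}{x}\right].$$

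The central step, and the main obstacle, is to show that this ratio is strictly increasing. The direct ``product of two monotone factors'' strategy used in Lemma~\ref{(2.1-eq)} does not apply verbatim: the factor $1/(x\tanh_p(x)^{p-1})$ is decreasing, whereas the bracket is the sum of the two \emph{increasing} functions $\log(\sinh_p(x)/x)$ and $1-\tanh_p(x)/x$ (the latter being increasing because the derivative of $\tanh_p(x)/x$ equals $[x(1-\tanh_p^p)-\tanh_p]/x^2$, which is nonpositive on $(0,\infty)$ by checking it vanishes at $0$ and has nonpositive derivative). To overcome this mixed monotonicity I would apply Lemma~\ref{lem1} a second time to
$$h_1(x)=\log(\sinh_p(x)/x)+1-\tanh_p(x)/x,\qquad h_2(x)=x\tanh_p(x)^{p-1},$$
which both vanish at $x=0$, thereby reducing the task to proving monotonicity of $h_1'/h_2'$. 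This final ratio should yield to an elementary but somewhat intricate manipulation using $\cosh_p^p-\sinh_p^p=1$ and the explicit derivative formulas.

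Once monotonicity of $g$ is established, the endpoint values $g(0^+)=1$ and $g(\infty)=p$ can be extracted from short Taylor expansions near $0$ (with $\sinh_p(x)=x+x^{p+1}/(p(p+1))+O(x^{2p+1})$ and $\cosh_p(x)=1+x^p/p+O(x^{2p})$, both $g_1(x)$ and $g_2(x)$ are asymptotic to $x^{p+1}/(p+1)$ up to a common factor) and from the asymptotic $\sinh_p(x)\sim\cosh_p(x)\sim D_p e^x$ at infinity. The stated inequalities then follow from $1<g(x)<p$: the lower bound by rewriting $g(x)>1$ as $p\log(\sinh_p(x)/x)>x/\tanh_p(x)-1$ and exponentiating, and the upper bound analogously from $g(x)<p$. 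The crux of the argument remains the monotonicity of $g_1'/g_2'$, which requires the iterated application of Lemma~\ref{lem1} and a careful handling of the resulting inequality for $h_1'/h_2'$.
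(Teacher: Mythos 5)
Your setup coincides with the paper's: the same decomposition $g=g_1/g_2$ with $g_1(0^+)=g_2(0^+)=0$, the same appeal to the monotone l'H\^opital rule (Lemma \ref{lem1}), the same simplified ratio
$$\frac{g_1'(x)}{g_2'(x)}=\frac{p}{x\tanh_p(x)^{p-1}}\left(\log\frac{\sinh_p(x)}{x}+1-\frac{\tanh_p(x)}{x}\right),$$
and your endpoint computations and the derivation of the two displayed inequalities from $1<g(x)<p$ are all correct. You have also put your finger on a real difficulty: the factor $1/(x\tanh_p(x)^{p-1})$ is decreasing while the bracket is increasing, so the ``product of two monotone functions'' shortcut used for Lemma \ref{(2.1-eq)} (and tacitly invoked again here, where the paper simply declares the ratio ``increasing'') does not settle the matter.

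However, your proposal stops exactly where the work begins. Writing $g_1'/g_2'=p\,h_1/h_2$ with $h_1(x)=\log(\sinh_p(x)/x)+1-\tanh_p(x)/x$ and $h_2(x)=x\tanh_p(x)^{p-1}$ and invoking Lemma \ref{lem1} a second time is a legitimate plan, but it only transfers the burden to proving that $h_1'/h_2'$ is increasing, and the phrase ``should yield to an elementary but somewhat intricate manipulation'' is not a proof. Computing
$$h_1'(x)=\frac{1}{\tanh_p(x)}-\frac{2-\tanh_p(x)^p}{x}+\frac{\tanh_p(x)}{x^2},\qquad h_2'(x)=\tanh_p(x)^{p-1}+(p-1)\,x\tanh_p(x)^{p-2}\bigl(1-\tanh_p(x)^p\bigr),$$
one sees that the new quotient is at least as unwieldy as the one you started from, and nothing in your write-up indicates why it should be monotone; it may well require a further idea (a different splitting, a series or sign analysis of $(g_1'/g_2')'$, or an auxiliary inequality in the spirit of Lemma \ref{2.3-lem}). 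The monotonicity of $g_1'/g_2'$ is the entire content of the lemma, so leaving it as a promissory note is a genuine gap, not a routine verification to be filled in later.
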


\begin{proof}
Write
$$g_1(x)=\sinh_p(x)\log\left(\frac{\sinh_p(x)}{x}\right),\quad g_2(x)=x\cosh_p(x)-\sinh_p(x), $$
clearly $g_1(0)=g_2(0)=0$.
Differentiation with respect $x$ gives
\begin{eqnarray*}
\frac{g_1'(x)}{g_2'(x)}&=&\frac{\cosh_p(x)(1+\log(\sinh_p(x)/x)-\sinh_p(x)/x}{x\cosh_p(x)^{2-p}\sinh_p(x)^{p-1}}\\
&=& \frac{\sinh_p(x)}{x}\frac{\cosh_p(x)(1+\log(1+\sinh_p(x)/x)-\sinh_p(x)/x}{\cosh_p(x)\tanh_p(x)^p},
\end{eqnarray*}
which is increasing, this implies that $g$ is increasing. The limiting values follows from the l'H\^{o}spital rule.
\end{proof}


%

\begin{lemma}\label{2.3-lem}
For all $x>0$ and $p>1$, we have
$$
\log(\cosh _p (x)) > \frac{x} {p}\tanh _p(x)^{p - 1} x.
$$
\end{lemma}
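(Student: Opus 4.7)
The plan is to reduce the stated inequality (reading the right-hand side as $\tfrac{x}{p}\tanh_p(x)^{p-1}$, since the trailing $x$ appears to be a typographical error) to showing that a certain function is positive on $(0,\infty)$. Set
$$F(x) = \log(\cosh_p(x)) - \frac{x\tanh_p(x)^{p-1}}{p}.$$
Since $F(0)=0$, it suffices to prove $F'(x)>0$ for $x>0$. To compute $F'$ I would use $\frac{d}{dx}\log(\cosh_p(x)) = \tanh_p(x)^{p-1}$, which follows from the paper's formula for $\frac{d}{dx}\cosh_p(x)$ after dividing by $\cosh_p(x)$, together with the identity $\tanh_p'(x)=1-\tanh_p(x)^p$.

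Carrying out the differentiation and collecting terms should yield
$$F'(x) = \frac{p-1}{p}\,\tanh_p(x)^{p-2}\Bigl[\tanh_p(x) - x\bigl(1-\tanh_p(x)^p\bigr)\Bigr].$$
For $p>1$ and $x>0$ the prefactor is strictly positive, so it remains to show that $h(x) := \tanh_p(x) - x\tanh_p'(x)$ is positive on $(0,\infty)$. Because $h(0)=0$, it is enough to observe that
$$h'(x) = -x\tanh_p''(x) = p\,x\,\tanh_p(x)^{p-1}\bigl(1-\tanh_p(x)^p\bigr)>0,$$
where I used $\tanh_p''(x) = -p\tanh_p(x)^{p-1}\tanh_p'(x)$ and the fact that $\tanh_p(x) \in (0,1)$ for $x>0$. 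This chain of steps yields $F'(x)>0$, hence $F(x)>0$, for all $x>0$.

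The argument rests on a single explicit differentiation followed by one unambiguous sign check, so the main obstacle is clerical rather than conceptual. If a more conceptual route is preferred, one can exploit the closed form $\log(\cosh_p(x)) = -\frac{1}{p}\log(1-\tanh_p(x)^p)$, which follows at once from $\cosh_p^p-\sinh_p^p=1$, and substitute $u=\tanh_p(x)$ to recast the desired inequality as
$$\int_0^u \frac{p v^{p-1}}{1-v^p}\,dv > u^{p-1}\int_0^u \frac{dv}{1-v^p}.$$
This is an immediate consequence of Chebyshev's integral inequality applied to the two positive increasing functions $v\mapsto pv^{p-1}$ and $v\mapsto 1/(1-v^p)$ on $[0,u]$, and like the direct proof above it works for every $p>1$.
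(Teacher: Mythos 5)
Your main argument is correct and essentially identical to the paper's: you take the same function $F(x)=\log(\cosh_p(x))-\tfrac{x}{p}\tanh_p(x)^{p-1}$ and arrive at the same expression for $F'$; the only difference is that the paper settles the sign of the bracket $\tanh_p(x)-x/\cosh_p(x)^p$ directly from $\sinh_p(x)>x$ and $\cosh_p(x)>1$, whereas you differentiate once more --- both work, and you are right that the trailing $x$ in the statement is a typo, as the paper's own proof confirms. Your Chebyshev-integral alternative is a genuinely different and also valid route: the representations $\log(\cosh_p(x))=\tfrac{1}{p}\int_0^u \tfrac{pv^{p-1}}{1-v^p}\,dv$ and $x=\int_0^u\tfrac{dv}{1-v^p}$ with $u=\tanh_p(x)$ check out, and both integrands are increasing for $p>1$.
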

\begin{proof}
Let \begin{equation*}f(x) = \log (\cosh _p (x)) - \frac{x} {p}\tanh _p(x)^{p
- 1}.
\end{equation*}

A simple computation yields
\begin{eqnarray*}
  f'(x) &=& \tanh _p(x)^{p - 1}  - \left( \frac{\tanh _p(x)^{p - 1} }
{p} + \frac{(p - 1)}{p}\frac{x\tanh _p(x)^{p - 2}}{\cosh _p(x)^p } \right) \\
   &=& \frac{p - 1}{p}\tanh _p(x)^{p - 2} \left(\tanh _p(x) - \frac{x}{\cosh _p(x)^p } \right),
\end{eqnarray*}
which is positive because $ \sinh _p (x) > x $ and $ \cosh _p (x) > 1 $
for all $x>0$. Thus $f(x)$ is strictly increasing and
$f(x)>f(0)=0$, this implies the proof.
\end{proof}

\noindent{\bf Proof of Theorem \ref{thm3.1}.} \rm  Write $f(x)=f_1(x)/f_2(x)$ for $x\in(0,\pi_p/2)$, where
$$f_1 (x) = \log \left( {\frac{x}{{\sin _p (x)}}} \right),f_2 (x) = \log \left( {\frac{{\sinh _p (x)}}{x}} \right).$$
For the proof of the monotonicity of the function $f$, it is enough to prove that
$$f'(x)=\frac{f_1'(x)f_2(x)-f_1(x)f_2'(x)}{f_2(x)^2}$$
is positive. After simple computation, this is equivalent to write 
$$
 x\left( {f_2 (x)} \right)^2 f'(x) = \displaystyle\frac{{\sin _p (x) - x\cos _p (x)}}{{\sin _p (x)}}f_2(x)
  - \displaystyle\frac{{x\cosh _p (x) - \sinh _p (x)}}{{\sinh _p (x)}}f_1(x),
 $$
which is positive by Lemmas \ref{(2.1-eq)} and \ref{(2.2-eq)}. Hence, $f$ is strictly increasing, and limiting values 
follows by applying the l'H\^opital rule.
This completes the proof.$\hfill\square$

\bigskip

\noindent{\bf Proof of Theorem \ref{thm3.2}.} \rm Write $g(x)=g_1(x)/g_2(x)$ for $x\in(0,\pi_p/2)$, where
$g_1 (x) = \log(x/\sin_p(x)),\, g_2 (x) = \log(\cosh_p(x))$.
Here we give the same argument as in the proof of the Theorem \ref{thm3.1}, and compute similary
\begin{eqnarray*}
\left( \log \left( \cosh _p (x) \right) \right)^2 g'(x)& =& \frac{\sin _p (x) - x\cos _p (x)}{x\sin _p (x)}
\log  \cosh _p (x) - \tanh _p(x)^{p - 1} \log \left( \frac{x}{\sin _p (x)} \right) \\
 &>& \frac{\sin _p (x) - x\cos _p (x)}{x\sin _p (x)\tanh _p(x)^{1-p} }\frac{x}{p}-\frac{\sin _p (x) - x\cos _p (x)}
{p\sin _p (x)\tanh _p(x)^{1-p} }\\
&=&0,
\end{eqnarray*}
by Lemmas \ref{(2.1-eq)} and \ref{2.3-lem}.
The limiting values follow from the l'H\^{o}spital rule easily, hence the proof is obvious.
$\hfill\square$\\

The following corollary follows from \cite[Lemma 3.3]{kvz} and Theorem \ref{thm3.2}.

\begin{corollary} For $p\in[2,\infty)$ and $x\in(0,\pi_p/2)$, we have
$$\cos_p(x)^\beta<\frac{1}{\cosh_p(x)^\beta}<\frac{\sin_p(x)}{x}< \frac{1}{\cosh_p(x)^\alpha}<1,$$
where $\alpha$ and $\beta$ are as in Theorem \ref{thm3.2}.
\end{corollary}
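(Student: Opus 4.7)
The proof is a straightforward chaining of four inequalities, of which the middle two are Theorem \ref{thm3.2} itself. So I would only need to handle the outermost inequalities
$$\cos_p(x)^\beta < \frac{1}{\cosh_p(x)^\beta} \quad\text{and}\quad \frac{1}{\cosh_p(x)^\alpha} < 1,$$
and then splice the chain together.

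\textbf{Step 1: The rightmost inequality.} For $x\in(0,\pi_p/2)$ the definition via $\sinh_p$ gives $\sinh_p(x) > 0$, hence by \eqref{equ3} we have $\cosh_p(x)^p = 1 + \sinh_p(x)^p > 1$, so $\cosh_p(x) > 1$. Since $\alpha = 1/(1+p) > 0$, this immediately yields $\cosh_p(x)^{-\alpha} < 1$.

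\textbf{Step 2: The leftmost inequality.} This is where I would invoke \cite[Lemma 3.3]{kvz}, which (as indicated in the remark preceding the corollary) supplies the bound
$$\cos_p(x)\,\cosh_p(x) < 1 \qquad\text{for } x\in(0,\pi_p/2),$$
equivalently $\cos_p(x) < 1/\cosh_p(x)$. Since $\cos_p(x) \geq 0$ on $[0,\pi_p/2]$ and $\beta > 0$, raising both sides to the $\beta$-th power preserves the inequality and gives $\cos_p(x)^\beta < 1/\cosh_p(x)^\beta$.

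\textbf{Step 3: Assembly.} Combine Steps 1 and 2 with the two middle inequalities already established in Theorem \ref{thm3.2} to obtain the complete chain
$$\cos_p(x)^\beta < \frac{1}{\cosh_p(x)^\beta} < \frac{\sin_p(x)}{x} < \frac{1}{\cosh_p(x)^\alpha} < 1.$$

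There is essentially no obstacle here since every piece is either trivial from the basic identity \eqref{equ3}, an application of monotonicity of $t \mapsto t^\beta$, or a direct quotation of Theorem \ref{thm3.2} and \cite[Lemma 3.3]{kvz}. The only point worth double-checking is that $\beta$ (as defined in Theorem \ref{thm3.2}) is indeed positive, which is clear since $\pi_p/2 > 1$ forces both $\log(\pi_p/2) > 0$ and $\log(\cosh_p(\pi_p/2)) > 0$ when $p \geq 2$.
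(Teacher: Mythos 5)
Your proposal is correct and matches the paper exactly: the paper gives no explicit proof but states that the corollary follows from \cite[Lemma 3.3]{kvz} (which supplies $\cos_p(x)\cosh_p(x)<1$, hence the leftmost inequality after raising to the power $\beta>0$) together with Theorem \ref{thm3.2} for the middle inequalities, with the rightmost inequality being immediate from $\cosh_p(x)>1$. Your write-up simply makes these routine steps explicit.
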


\vspace{.5cm}

{\sc Acknowledgments.} The authors are indebted to the 
anonymous referee for his/her valuable comments.


\vspace{.5cm}

\end{document}